\documentclass[12pt, reqno]{amsart}
\usepackage{ amsmath,amsthm, amscd, amsfonts, amssymb, graphicx, color}
\usepackage[bookmarksnumbered, colorlinks, plainpages]{hyperref}
\textwidth 12 cm \textheight 18 cm

\oddsidemargin 2.12cm \evensidemargin 1.8cm

\setcounter{page}{1}


\newtheorem{thm}{Theorem}[section]
\newtheorem{cor}[thm]{Corollary}

\newtheorem{exam}[thm]{Example}
\numberwithin{equation}{section}


\begin{document}

\title{generalized Jacobson's lemma for generalized Drazin inverses}

\author{Huanyin Chen}
\author{Marjan Sheibani}
\address{
Department of Mathematics\\ Hangzhou Normal University\\ Hang -zhou, China}
\email{<huanyinchen@aliyun.com>}
\address{Women's University of Semnan (Farzanegan), Semnan, Iran}
\email{<sheibani@fgusem.ac.ir>}

\subjclass[2010]{15A09; 16U99; 47A05.} \keywords{Generalized Drazin inverse; Drazin inverse; Group inverse; Jacobson's lemma; ring.}

\begin{abstract}
We present new generalized Jacobson's lemma for generalized Drazin inverses. This extend the main results on g-Drazin inverse of Yan, Zeng and Zhu (Linear $\&$ Multilinear Algebra, {\bf 68}(2020), 81--93).\end{abstract}

\maketitle

\section{Introduction}

Let $R$ be an associative ring with an identity. The commutant of $a\in R$ is defined by $comm(a)=\{x\in
R~|~xa=ax\}$. The double commutant of $a\in R$ is defined by $comm^2(a)=\{x\in R~|~xy=yx~\mbox{for all}~y\in comm(a)\}$. An element $a\in R$ has g-Drazin inverse in case there exists $x\in R$ such that $$x=xax, x\in comm^2(a), a-a^2x\in R^{qnil}.$$ The preceding $x$ is unique if it exists, we denote it by $a^{d}$. Here, $R^{qnil}=\{a\in R~|~1+ax\in R^{-1}~\mbox{for
every}~x\in comm(a)\}$., where $R^{-1}$ stands for the set of all invertible elements of $R$. As it is known, $a\in R$ has g-Drazin inverse if and only if there exists an idempotent $p\in comm^2(a)$ such that $a+p\in R$ is invertible and $ap\in R^{qnil}$ (see~\cite[Lemma 2.4]{K}).

For any $a,b\in R$, Jacobson's Lemma for invertibility states that $1-ab\in R^{-1}$ if and only if $1-ba\in R^{-1}$ and $(1-ba)^{-1}=1+b(1-ab)^{-1}a$ (see~\cite[Lemma 1.4]{M}). Let $a,b\in R^d$. Zhuang et al. proved the Jacobson's Lemma for g-Drazin inverse. That is, it was proved that $1-ab\in R^d$ if and only if $1-ba\in R^d$ and $$(1-ba)^d=1+b(1-ab)^da$$ (see~\cite[Theorem 2.3]{Z}). Jacobson's Lemma plays an important role in matrix and operator theory. Many papers discussed this lemma for g-Drazin inverse in the setting of matrices, operators, elements of Banach algebras or rings. Mosic generalized Jacobson's Lemma for g-Drain inverse to the case that $bdb=bac, dbd=acd$ (see~\cite[Theorem 2.5]{M}). Recently, Yan et al. extended Jacobson's Lemma to the case $dba=aca, dbd=acd$ (see~\cite[Theorem 3.3]{YZZ}). This condition was also considered for bounded linear operators in ~\cite{YF, YZY, Z1}.

The motivation of this paper is to extend the main results of Yan et al. (see~\cite{YZZ}) to a wider case. The Drazin inverse of $a\in R$, denoted by $a^D$, is the unique element $a^D$ satisfying the following three equations $$a^D=a^Daa^D, a^D\in comm(a), a^k=a^{k+1}a$$ for some $k\in {\Bbb N}$. The small integer $k$ is called the Drazin index of $a$, and is denoted by $i(a)$. Moreover, we prove the generalized Jacobson's lemma for the Drazin inverse.

Throughout the paper, all rings are associative with an identity. $R^{D}$ and $R^{d}$ denote the sets of all Drazin and g-Drazin invertible elements in $R$ respectively.
We use $R^{nil}$ to denote the set of all nilpotents of the ring $R$. ${\Bbb C}$ stands for the field of all complex numbers.

\section{generalized Jacobson's lemma}

We come now to the main result of this paper which will be the tool in our following development.

\begin{thm} Let $R$ be a ring, and let $a,b,c,d\in R$ satisfying $$\begin{array}{c}
(ac)^2=(db)(ac), (db)^2=(ac)(db);\\
b(ac)a=b(db)a, c(ac)d=c(db)d.
\end{array}$$ Then $\alpha=1-bd\in R^d$ if and only if $\beta=1-ac\in R^d$. In this case,
$$\beta^{d}=\big[1-d\alpha^{\pi}(1-\alpha (1+bd))^{-1}bac\big](1+ac)+d\alpha^dbac.$$\end{thm}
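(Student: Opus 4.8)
The plan is to prove the implication ``$\alpha\in R^d\Rightarrow\beta\in R^d$'' together with the displayed formula for $\beta^d$, and to deduce the converse from the substitution $(a,b,c,d)\mapsto(d,c,b,a)$: this leaves the four hypothesis identities invariant and sends $\alpha\mapsto 1-ca$, $\beta\mapsto 1-db$, so that, after two applications of the classical Jacobson equivalence $1-xy\in R^d\Leftrightarrow 1-yx\in R^d$ (\cite[Theorem 2.3]{Z}), it interchanges the two conclusions.

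First I would set $p=ac$, $q=db$, $n=p-q$ and unpack the hypotheses: $(ac)^2=(db)(ac)$ and $(db)^2=(ac)(db)$ say precisely that $np=0$ and $nq=0$, whence $n^2=0$; the remaining two read $bpa=bqa$ and $cpd=cqd$. I would also record $1-\alpha(1+bd)=(1-bd)(1+bd)=(bd)^2$ and $1+ac=2-\beta$, and note that $\alpha^\pi:=1-\alpha\alpha^d\in comm^2(\alpha)$ commutes with $bd$ while $\alpha\alpha^\pi\in R^{qnil}$. Consequently $\alpha^\pi(bd)^2\alpha^\pi=(\alpha^\pi bd)^2$ is invertible in the corner ring $\alpha^\pi R\alpha^\pi$, which is exactly what makes the symbol $\alpha^\pi\bigl(1-\alpha(1+bd)\bigr)^{-1}$ — and hence the candidate
\[
y:=\bigl[1-d\alpha^\pi\bigl(1-\alpha(1+bd)\bigr)^{-1}bac\bigr](1+ac)+d\alpha^dbac
\]
— well defined. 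As a warm-up I would settle the case $\alpha\in R^{-1}$: factoring $1-p=(1-q)-n$ and using $n^2=0$ together with $n(1-q)=n$ (from $nq=0$) shows $1-q$ is invertible iff $\alpha$ is, and composing the two inversion formulas while simplifying via $d\alpha^{-1}(1-\alpha)b=d\alpha^{-1}b-db$ yields $\beta^{-1}=1+ac+d\alpha^{-1}bac$, which is exactly $y$ for $\alpha^\pi=0$.

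For the general case I would verify that $y$ is the g-Drazin inverse of $\beta$, either by checking the three defining relations $y\beta y=y$, $y\in comm^2(\beta)$, $\beta-\beta^2y\in R^{qnil}$, or equivalently by producing the spectral idempotent and invoking \cite[Lemma 2.4]{K}. The identity $y\beta y=y$ and the commutation statements are bookkeeping: one uses $(1-ac)(1+ac)=1-(db)(ac)$, $n^2=np=nq=0$, and $\alpha^d,\alpha^\pi\in comm^2(\alpha)$ together with $bpa=bqa$, $cpd=cqd$ to move $a,c,d$ across the relevant commutants. For the quasinilpotency I would split along the idempotents $e:=\alpha\alpha^d$ and $f:=\alpha^\pi$: the warm-up identity, read inside $eRe$, shows that the $e$-component of $\beta$ is a unit with the $e$-component of $y$ as inverse, so $\beta-\beta^2y$ lands in $fRf$, where it must coincide with a quasinilpotent obtained by transferring $f(1-bd)=f\alpha\in R^{qnil}$ to $f(1-ac)$ through the square-zero element $n$. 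Uniqueness of the g-Drazin inverse then gives $y=\beta^d$.

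The step I expect to be the real obstacle is the quasinilpotency $\beta-\beta^2y\in R^{qnil}$. One cannot simply restrict $a,b,c,d$ to the corner $\alpha^\pi R\alpha^\pi$, since $\alpha^\pi$ is only known to commute with the product $bd$, not with $a,b,c,d$ individually; so the passage ``$1-bd$ quasinilpotent on the $f$-corner $\Rightarrow 1-ac$ quasinilpotent on the $f$-corner'' must be carried out inside $R$. Concretely, for each $z$ commuting with $\beta-\beta^2y$ one shows $1+(\beta-\beta^2y)z$ is a unit, using $n^2=np=nq=0$, $bpa=bqa$, $cpd=cqd$, the quasinilpotency of $f\alpha$, and once more the classical Jacobson equivalence to trade $bd$ for $db$; the point is that $f(1-ac)$ is a ``triangular'' perturbation of $f(1-db)f$ by the square-zero $n$, which lets the computation close. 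Everything else is routine, if lengthy, manipulation, and once the quasinilpotency is secured both the formula and the equivalence follow.
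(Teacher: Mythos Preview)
Your overall architecture matches the paper's---define the candidate $y$, verify the three g-Drazin axioms, and deduce the converse from the symmetry $(a,b,c,d)\mapsto(d,c,b,a)$ plus classical Jacobson---and the observation that $n=ac-db$ satisfies $np=nq=n^{2}=0$ is a clean repackaging that does streamline the warm-up and parts of $y\beta y=y$. The gap is in the quasinilpotency step, and it is structural rather than merely sketchy. Your plan is to split along $e=\alpha\alpha^{d}$ and $f=\alpha^{\pi}$ and to treat ``$f(1-ac)$ as a triangular perturbation of $f(1-db)f$ by the square-zero $n$''. But $f$ is only known to commute with $bd$; nothing in the hypotheses relates $(bd)(db)$ to $(db)(bd)$, so $f$ commutes neither with $db$ nor with $ac$, and hence $\beta$, $1-db$ and $n$ have no reason to respect the Peirce decomposition by $e,f$. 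In particular $\beta-\beta^{2}y$ does not land in $fRf$, ``the $e$-component of $\beta$'' is not a well-defined object on which the warm-up identity can be read, and the triangular picture collapses. The same issue undermines ``move $a,c,d$ across the relevant commutants'' in the double-commutant step: individual letters do not cross $\alpha^{d}$ or $\alpha^{\pi}$.

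What replaces corner-splitting in the paper is a single commutation you never isolate: from $c(ac)d=c(db)d$ and $(ac)^{2}=(db)(ac)$ one gets $acdbd=dbacd$, hence $(bacd)(bd)=(bd)(bacd)$, so $bacd\in comm(\alpha)$ and therefore $bacd$ commutes with $\alpha^{d}$, $\alpha^{\pi}$ and $(1-\alpha^{\pi}\alpha(1+bd))^{-1}$. This is the pivot that lets one compute $y\beta=1-d\alpha^{\pi}(1-\alpha^{\pi}\alpha(1+bd))^{-1}bac$ in closed form, obtain
\[
\beta-\beta y\beta=d\,\alpha\,(bacd)\,\alpha^{\pi}\bigl(1-\alpha^{\pi}\alpha(1+bd)\bigr)^{-2}bac,
\]
and then reduce its quasinilpotency, via Jacobson for units, to that of $\alpha\alpha^{\pi}$. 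Likewise, for $y\in comm^{2}(\beta)$ the paper shows that $bacsdbd\in comm(\alpha)$ for every $s\in comm(ac)$, and all the needed commutations (Claims~1 and~2 in the paper) flow from that; calling this ``bookkeeping'' understates what is involved. Your $n^{2}=0$ reformulation is compatible with all of this, but without the $bacd\in comm(\alpha)$ pivot the argument does not close.
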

\begin{proof} Let $p=\alpha^{\pi},x=\alpha^d$. Then $1-p\alpha (1+bd)\in R^{-1}$. Let $$y=\big[1-dp(1-p\alpha (1+bd))^{-1}bac\big](1+ac)+dxbac.$$ We shall prove that
$\beta^d=y$.

Step 1. $y\beta y=y$. We see that
$$y\beta=1-(ac)^2-dp(1-p\alpha (1+bd))^{-1}bac\big[1-(ac)^2\big]+dxbac(1-ac).$$
We compute that
$${\scriptsize\begin{array}{lll}
y\beta&=&1-(ac)^2-dp(1-p\alpha (1+bd))^{-1}bac\big[1-(ac)^2\big]+dxbac(1-ac)\\
&=&1-[dbac-dxbac(1-ac)]-dp(1-p\alpha (1+bd))^{-1}[bac-bac(ac)^2\big]\\
&=&1-[dbac-dx(bac-bacac)]-dp(1-p\alpha (1+bd))^{-1}[ba-bacdba]c\\
&=&1-[dbac-dx(1-bd)bac]-dp(1-p\alpha (1+bd))^{-1}[1-(bd)^2]bac\\
&=&1-dpbac-dp(1-p\alpha (1+bd))^{-1}p\alpha (1+bd)bac\\
&=&1-dp(1-p\alpha (1+bd))^{-1}\big[(1-p\alpha (1+bd))+p\alpha (1+bd)\big]bac\\
&=&1-dp(1-p\alpha(1+bd))^{-1}bac.\\
\end{array}}$$
Since $acdbd=a(cdbd)=a(cacd)=(ac)^2d=(dbac)d=dbacd$, we have $(bacd)(bd)=(bd)(bacd)$, and so $(bacd)\alpha =\alpha (bacd).$ Hence,
$(bacd)x=x(bacd),$ and then
$$\begin{array}{ll}
&dp(1-p\alpha(1+bd))^{-1}bacdxbac(1-ac)\\
=&d(1-p\alpha(1+bd))^{-1}pxbacdbac(1-ac)\\
=&0.
\end{array}$$ Therefore we have
$${\tiny\begin{array}{ll}
&y\beta y\\
=&y-dp(1-p\alpha(1+bd))^{-1}bacy\\
=&y-dp(1-p\alpha(1+bd))^{-1}bac\big[1-dp(1-p\alpha (1+bd))^{-1}bac\big](1+ac)\\
=&y-dp(1-p\alpha(1+bd))^{-1}bac(1+ac)+dp(1-p\alpha(1+bd))^{-2}(bacd)bac(1+ac)\\
=&y-dp(1-p\alpha(1+bd))^{-1}(1+bd)bac+dp(1-p\alpha(1+bd))^{-2}(bd)^2(1+bd)bac\\
=&y-dp(1-p\alpha(1+bd))^{-2}\big[p-p\alpha(1+bd)-p(bd)^2\big](1+bd)bac\\
=&y.\\
\end{array}}$$

Step 2. $\beta-\beta y\beta\in R^{qnil}$.
Since $y=y\beta y$, we see that $(1-y\beta)^2=1-y\beta$. Hence,
$$\begin{array}{lll}
\beta-\beta y\beta&=&\beta (1-y\beta )\\
&=&\beta dp(1-p\alpha(1+bd))^{-1}(bacd)p(1-p\alpha(1+bd))^{-1}bac\\
&=&\beta d(bacd)p(1-p\alpha(1+bd))^{-2}bac\\
&=&(1-ac)d(bacd)p(1-p\alpha(1+bd))^{-2}bac\\
&=&d\alpha bacdp(1-p\alpha(1+bd))^{-2}bac.
\end{array}$$ Let $z\in comm(\beta -\beta y\beta)$. Then
$$zd\alpha bacdp(1-p\alpha(1+bd))^{-2}bac=d\alpha bacdp(1-p\alpha(1+bd))^{-2}bacz.$$
We will suffice to prove $$1+d\alpha bacdp(1-p\alpha(1+bd))^{-2}bacz\in R^{-1}.$$
Clearly, we have $$p=(bd)^2p[1-p\alpha (1+bd)]^{-1}=(bd)^4p[1-p\alpha (1+bd)]^{-2}.$$
Hence, we get
$$\begin{array}{lll}
(baczdbacd)\alpha p&=&(baczd)\alpha bacd p[1-p\alpha (1+bd)]^{-2}(bd)^4\\
&=&(baczd)\alpha bacd p[1-p\alpha (1+bd)]^{-2}bac(db)^2d\\
&=&bac\big[zd\alpha bacdp(1-p\alpha(1+bd))^{-2}bac\big]acdbd\\
&=&bac\big[d\alpha bacdp(1-p\alpha (1+bd))^{-2}bacz\big]acdbd\\
&=&bdbdbacd\big[\alpha p(1-p\alpha (1+bd))^{-2}bacz\big]acdbd\\
&=&bacdbdbd\big[\alpha p(1-p\alpha (1+bd))^{-2}bacz\big]a(cdbd)\\
&=&bdbdbdbd\big[\alpha p(1-p\alpha (1+bd))^{-2}bacz\big]a(cacd)\\
&=&bdbdbdbd\alpha p[1-p\alpha (1+bd)]^{-2}baczdbacd\\
&=&bdbdbdbd\alpha p[1-p\alpha (1+bd)]^{-2}baczdbacd\\
&=&\alpha p(baczdbacd).
\end{array}$$

Step 3. $y\in comm^2(\beta)$. Let $s\in comm(\beta)$. Then $s\beta=\beta s$, and so $s(ac)=(ac)s$.

Claim 1. $s(dxbac)=(dxbac)s.$ We easily check that
$(bacsdbd)\alpha=bacs\beta dbd\alpha=bac\beta sdbd\alpha=\alpha (bacsdbd).$ Hence $(bacsdbd)x=x(bacsdbd)$, and then
$$\begin{array}{lll}
s(dpbac)&=&sd(bd)^4p[1-p\alpha (1+bd)]^{-2}bac\\
&=&s(ac)^2dbdbdp[1-p\alpha (1+bd)]^{-2}bac\\
&=&d(bacsdbd)bdp[1-p\alpha (1+bd)]^{-2}bac\\
&=&dbdp[1-p\alpha (1+bd)]^{-2}(bacsdbd)bac\\
&=&dbdp[1-p\alpha (1+bd)]^{-2}bacs(ac)^3\\
&=&dbdp[1-p\alpha (1+bd)]^{-2}(bd)^3bacs\\
&=&d(bd)^4p[1-p\alpha (1+bd)]^{-2}bacs\\
&=&(dpbac)s.
\end{array}$$ Since $sdbac=s(ac)^2=(ac)^2s=dbacs$, we have
$$sd\alpha xbac=d\alpha xbacs,$$ and so
$$sdxbac-sdbdxbac=dxbacs-dbdxbacs.$$

On the other hand, we have
$$\begin{array}{lll}
s(dbdpbac)&=&sd(bd)^5p[1-p\alpha (1+bd)]^{-2}bac\\
&=&s(ac)^4dbdp[1-p\alpha (1+bd)]^{-2}bac\\
&=&dbdbd(bacsdbd)p[1-p\alpha (1+bd)]^{-2}bac\\
&=&dbdbdp[1-p\alpha (1+bd)]^{-2}(bacsdbd)bac\\
&=&dbdbdp[1-p\alpha (1+bd)]^{-2}bacs(ac)^3\\
&=&dbdbdp[1-p\alpha (1+bd)]^{-2}(bd)^3bacs\\
&=&dbd(bd)^4p[1-p\alpha (1+bd)]^{-2}bacs\\
&=&(dbdpbac)s.
\end{array}$$ Since $sdbdbac=s(ac)^3=(ac)^3s=dbdbacs$, we have
$$dbd\alpha xbacs=sdbd\alpha xbac.$$ Then we have $$\begin{array}{lll}
dbdbd\alpha xbacs&=&ac(dbd\alpha xbacs)\\
&=&ac(sdbd\alpha xbac)\\
&=&sacdbd\alpha xbac\\
&=&sdbdbd\alpha xbac,
\end{array}$$
and so $$dbd(1+bd)\alpha xbacs=sdbd(1+bd)\alpha xbac,$$ and then
$$dbdxbacs-dbd(bd)^2xbacs=sdbdxbac-sdbd(bd)^2xbac.$$
One easily checks that
$$\begin{array}{lll}
d(bd)^3xbacs&=&dx(bd)^3bacs\\
&=&dxb(ac)^4s\\
&=&dx(bacsdbd)bac\\
&=&d(bacsdbd)xbac\\
&=&(ac)^2sdbdxbac\\
&=&sd(bd)^3xbac.
\end{array}$$
This implies that $dbdxbacs=sdbdxbac$, and therefore $s(dxbac)=(dxbac)s.$

Claim 2. $sdp(1-p\alpha (1+bd))^{-1}bac(1+ac)=dp(1-p\alpha (1+bd))^{-1}bac(1+ac)s$.
Set $t=dp(1-p\alpha (1+bd))^{-1}bac(1+ac).$ Then we have
$$\begin{array}{lll}
st&=&sdp(1-p\alpha (1+bd))^{-1}bac(1+ac)\\
&=&sd(bd)^4p[1-p\alpha (1+bd)]^{-3}bac(1+ac)\\
&=&(ac)^3sdbdp[1-p\alpha (1+bd)]^{-3}bac(1+ac)\\
&=&dbdp[1-p\alpha (1+bd)]^{-3}bs(ac)^4(1+ac)\\
\end{array}$$
Also we have
$$\begin{array}{lll}
ts&=&dp(1-p\alpha (1+bd))^{-1}bac(1+ac)s\\
&=&dp[1-p\alpha (1+bd)]^{-3}(bd)^4bsac(1+ac)\\
&=&dbdp[1-p\alpha (1+bd)]^{-3}b(ac)^3sac(1+ac)\\
&=&dbdp[1-p\alpha (1+bd)]^{-3}bs(ac)^4(1+ac)\\
\end{array}$$
Hence, $st=ts$, and so $y\in comm^2(\beta)$. Therefore $y=\beta^d$, as required.

$\Longleftarrow$ Since $1-ac\in R^d$, it follows by Jacobson's Lemma that $1-ca\in R^d$. Applying the preceding discussion, we obtain that $1-bd\in R^d$, as desired.
\end{proof}

\begin{cor} (~\cite[Theorem 3.1]{YZZ}) Let $R$ be a ring, and let $a,b,c,d\in R$ satisfying $$acd=dbd, dba=aca.$$ Then $1-bd\in R^{d}$ if and only if
$1-ac\in R^d$. In this case, $$(1-bd)^{d}=\big[1-dp(1-p\alpha (1+bd))^{-1}bac\big](1+ac)+d(1-ac)^dbac.$$\end{cor}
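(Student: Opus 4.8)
The plan is to reduce the corollary to Theorem~2.1 by showing that the two hypotheses $acd=dbd$ and $dba=aca$ already force the four identities demanded there, after which the equivalence and the closed form are read off directly.

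First I would verify the two ``quadratic'' identities. Regrouping, $(db)(ac)=(dba)c$, and substituting $dba=aca$ yields $(db)(ac)=(aca)c=(ac)^2$. Dually, $(ac)(db)=(acd)b$, and substituting $acd=dbd$ yields $(ac)(db)=(dbd)b=(db)^2$. Hence $(ac)^2=(db)(ac)$ and $(db)^2=(ac)(db)$. Next I would verify the two ``linking'' identities. Using $dba=aca$ once on the inside, $b(db)a=b(dba)=b(aca)=b(ac)a$; using $acd=dbd$ once on the inside, $c(ac)d=c(acd)=c(dbd)=c(db)d$. Thus $b(ac)a=b(db)a$ and $c(ac)d=c(db)d$, so all four hypotheses of Theorem~2.1 hold.

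Theorem~2.1 then gives that $\alpha=1-bd\in R^d$ if and only if $\beta=1-ac\in R^d$, together with the displayed expression for the generalized Drazin inverse; writing $p=\alpha^\pi$ and $\beta=1-ac$ in that expression produces exactly the formula in the corollary (if the role of $1-bd$ and $1-ac$ in the formula needs to be interchanged, one simply also applies Theorem~2.1 with the pair $(c,a)$, equivalently invokes Jacobson's Lemma to pass between $\alpha^d$ and $\beta^d$). I do not anticipate a genuine obstacle here: the whole argument is the short bookkeeping of recognizing $dba$ and $acd$ as the relevant subwords, so the only point requiring care is not transposing a left or right factor when regrouping the products.
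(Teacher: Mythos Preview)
Your reduction is exactly the paper's own proof: verify the four identities of Theorem~2.1 by regrouping $(ac)^2=(aca)c$, $(db)^2=(dbd)b$, $b(ac)a=b(aca)$, $c(ac)d=c(acd)$ and substituting $dba=aca$, $acd=dbd$, then invoke Theorem~2.1. Your parenthetical caution about which of $(1-bd)^d$ and $(1-ac)^d$ the formula actually computes is well placed, since Theorem~2.1 literally expresses $(1-ac)^d$ in terms of $(1-bd)^d$ and the corollary's displayed formula appears to contain a misprint; but this does not affect the argument.
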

\begin{proof} By hypothesis, we easily check that
$$\begin{array}{c}
(ac)^2=(aca)c=(dba)c=(db)(ac),\\
(db)^2=(dbd)b=(acd)b=(ac)(db);\\
b(ac)a=b(aca)=b(dba)=b(db)a,\\
c(ac)d=c(acd)=c(dbd)=c(db)d.
\end{array}$$ There the result follows by Theorem 2.1.\end{proof}

We now generalize ~\cite[Corollary 3.5]{YZZ} as follows.

\begin{cor} Let $R$ be a ring, and let $a,b,c\in R$ satisfying $$\begin{array}{c}
(aba)b=(aca)b, b(aba)=b(aca),\\
(aba)c=(aca)c, c(aba)=c(aca).
\end{array}$$ Then $1-ba\in R^{d}$ if and only if
$1-ac\in R^d$. In this case, $$(1-ba)^{d}=\big[1-ap(1-p\alpha (1+ba))^{-1}bac\big](1+ac)+a(1-ac)^dbac.$$\end{cor}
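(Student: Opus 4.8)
The plan is to obtain this corollary as the special case $d=a$ of Theorem 2.1 --- just as Corollary 2.2 is a special case of that theorem --- by applying Theorem 2.1 to the quadruple $(a,b,c,a)$ and reading off the conclusion.

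First I would check that $(a,b,c,a)$ satisfies the four defining identities of Theorem 2.1. Expanding all products associatively, $(ac)^{2}=acac$ while $(db)(ac)=(ab)(ac)=abac$, and these agree by the hypothesis $(aba)c=(aca)c$; likewise $(db)^{2}=(ab)^{2}=abab$ and $(ac)(db)=acab$ agree by $(aba)b=(aca)b$; next $b(ac)a=baca$ and $b(db)a=b(ab)a=baba$ agree by $b(aba)=b(aca)$; and $c(ac)d=c(ac)a=caca$ and $c(db)d=c(ab)a=caba$ agree by $c(aba)=c(aca)$. Hence all four hypotheses of Theorem 2.1 hold for this data, and the theorem applies at once: $\alpha:=1-ba=1-bd\in R^{d}$ if and only if $1-ac\in R^{d}$. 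For the closed form I would substitute $d=a$ into the formula supplied by Theorem 2.1 (in the form used in its proof, with $p=\alpha^{\pi}$): then $bd=ba$ and the term $d\alpha^{d}bac$ becomes $a\alpha^{d}bac$, which gives exactly the displayed expression.

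I do not anticipate a genuine obstacle: this corollary is a pure specialization of Theorem 2.1, and the only real content is the elementary verification above that the four one-sided relations assumed here are precisely the four relations of Theorem 2.1 once $d=a$. The single thing that needs care is the bookkeeping in the closed-form expression --- keeping track of which of $1-ba$, $1-ac$ plays the role of $\alpha$ and which that of $\beta$ in Theorem 2.1, and substituting $a$ for $d$ consistently throughout; if the mirror version of the formula is wanted (with the roles of $(1-ba)^{d}$ and $(1-ac)^{d}$ interchanged), it is extracted exactly as in the closing lines of the proof of Theorem 2.1, by applying Jacobson's lemma to the pair $1-ac$, $1-ca$.
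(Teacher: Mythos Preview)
Your approach is exactly the paper's: specialize Theorem~2.1 to the quadruple $(a,b,c,a)$ (i.e., set $d=a$), verify that the four stated one-sided identities become precisely the four hypotheses of Theorem~2.1, and then read off the equivalence and the closed-form expression. Your bookkeeping remark about which of $1-ba$ and $1-ac$ plays the role of $\alpha$ versus $\beta$ is the only place any care is needed, and you handle it the same way the paper does.
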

\begin{proof} By hypothesis, we verify that
$$\begin{array}{c}
(ac)^2=(aca)c=(aba)c=(ab)(ac),\\
(ab)^2=(aba)b=(aca)b=(ac)(db);\\
b(ac)a=b(aca)=b(aba)=b(ab)a,\\
c(ac)a=c(aca)=c(aba)=c(ab)a.
\end{array}$$ This completes the proof by Theorem 2.1.\end{proof}

It is convenient at this stage to derive the following.

\begin{thm} Let $R$ be a ring, let $n\in {\Bbb N}$, and let $a,b,c,d\in R$ satisfying $$\begin{array}{c}
(ac)^2=(db)(ac), (db)^2=(ac)(db);\\
b(ac)a=b(db)a, c(ac)d=c(db)d.
\end{array}$$ Then $(1-bd)^n\in R^d$ if and only if $(1-ac)^n\in R^d$.\end{thm}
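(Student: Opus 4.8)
The plan is to reduce Theorem 2.5 to Theorem 2.1 by showing that the quadruple $(a,b,c,d)$ gives rise, for each $n$, to a new quadruple to which Theorem 2.1 applies and for which $1-bd$ and $1-ac$ are replaced by $(1-bd)^n$ and $(1-ac)^n$. The first step is to expand $(1-bd)^n$. Using the commuting relation $(ac)(db)=(db)(ac)$ — equivalently $(bd)^3=(bd)(bd)(bd)$ controlled by $(ac)(db)=(db)^2$, which forces $(bd)^k$ to collapse onto low powers in the relevant products — one writes $(1-bd)^n = 1 - b d_n$ for a suitable $d_n$ built from $d$ and powers of $bd$; concretely $1-bd_n = \sum_{k=0}^n \binom{n}{k}(-bd)^k = 1 - b\big(\sum_{k=1}^n \binom{n}{k}(-1)^{k-1}(db)^{k-1}d\big)$, so $d_n := \sum_{k=1}^n \binom{n}{k}(-1)^{k-1}(db)^{k-1}d$. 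Symmetrically $(1-ac)^n = 1 - a c_n$ with $c_n := \sum_{k=1}^n \binom{n}{k}(-1)^{k-1}(ca)^{k-1}c$ — or, to keep the factor $ac$ on the correct side, one writes $(1-ac)^n = 1 - a_n c$ or $1 - ac_n$ depending on how Theorem 2.1 is to be invoked; the natural choice is to keep $a$ and $c$ fixed and absorb the powers into new middle factors $b_n, d_n$, i.e.\ aim for $1-b_n d$ and $1-a c_n$, or uniformly $1 - b' d'$ and $1 - a' c'$ with $a'=a$, $c'=c$, $b'=b_n$, $d'=d_n$. The cleanest bookkeeping is: set $A = ac$, $B = db$; then $A^2 = BA$, $B^2 = AB$, and one checks $(1-bd)^n = 1 - b\tilde d$, $(1-ac)^n = 1 - \tilde a c$ where $\tilde a = \sum \binom{n}{k}(-1)^{k-1}(ac)^{k-1}a$ and $\tilde d = \sum \binom{n}{k}(-1)^{k-1}(db)^{k-1}d$.

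The second step is to verify that the new quadruple $(\tilde a, b, c, \tilde d)$ satisfies the four hypotheses of Theorem 2.1, namely
\[
(\tilde a c)^2 = (\tilde d b)(\tilde a c),\quad (\tilde d b)^2 = (\tilde a c)(\tilde d b),\quad
b(\tilde a c)\tilde a = b(\tilde d b)\tilde a,\quad c(\tilde a c)\tilde d = c(\tilde d b)\tilde d.
\]
Here $\tilde a c = 1-(1-ac)^n$ up to sign conventions — more precisely $1-\tilde a c = (1-ac)^n$, so $\tilde a c$ is a polynomial in $ac$ with zero constant term, and likewise $\tilde d b$ is the corresponding polynomial in $db$. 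Since $ac$ and $db$ satisfy $(ac)^2=(db)(ac)$ and $(db)^2=(ac)(db)$, every monomial $(ac)^i(db)^j$ with $i+j\geq 1$ reduces (by repeatedly replacing $(ac)(db)$ by $(db)^2$ and $(db)(ac)$ by $(ac)^2$) to a single power of $ac$ or of $db$; this shows that any polynomial in $\tilde a c$ and $\tilde d b$ with zero constant term likewise lies in the span of powers of $ac$ and powers of $db$, and the first two identities follow because the analogous identities hold for $ac$ and $db$ themselves. The two "mixed" identities $b(\tilde a c)\tilde a = b(\tilde d b)\tilde a$ and $c(\tilde a c)\tilde d = c(\tilde d b)\tilde d$ are obtained the same way: expand $\tilde a$, $\tilde d$, $\tilde a c$, $\tilde d b$ into the polynomial normal forms, then reduce each monomial using $b(ac)a = b(db)a$ and $c(ac)d = c(db)d$ together with the collapsing relations; each term on the left matches the corresponding term on the right after the reduction. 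This is the step I expect to be the main obstacle — not because it is deep, but because it is a careful combinatorial bookkeeping exercise: one must confirm that the power-collapsing relations interact correctly with the two mixed hypotheses, in particular that $b(ac)^k a = b(db)^{k-1}(ac)a = b(db)^k a$ holds for all $k\geq 1$ (and the symmetric statement for $c(ac)^k d$), which is what makes the mixed identities survive raising to the $n$-th power.

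Once the hypotheses are verified for $(\tilde a, b, c, \tilde d)$, the final step is immediate: Theorem 2.1 applied to this quadruple gives $1 - b\tilde d \in R^d \iff 1 - \tilde a c \in R^d$, which is exactly $(1-bd)^n \in R^d \iff (1-ac)^n \in R^d$. (If one wants the explicit formula for $\big((1-ac)^n\big)^d$ as well, it is read off from the formula in Theorem 2.1 with $a,d$ replaced by $\tilde a,\tilde d$ and $bd$ replaced by $b\tilde d$, though the statement of Theorem 2.5 only asserts the equivalence.) For the converse direction one may, exactly as in the proof of Theorem 2.1, first pass from $1-ac$ to $1-ca$ via ordinary Jacobson's Lemma and then invoke the forward implication; alternatively the symmetry of the whole construction under $(a,b,c,d)\mapsto(c,d,a,b)$ handles it directly.
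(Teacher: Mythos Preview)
Your proposal is correct and follows essentially the same strategy as the paper: rewrite $(1-bd)^n$ and $(1-ac)^n$ as $1-(\text{product})$ by absorbing the binomial polynomial into one of the four letters, verify that the new quadruple still satisfies the four hypotheses of Theorem~2.1 (the key inductive step being exactly $b(ac)^{k}a=b(db)^{k}a$ and its $c,d$ analogue, which you correctly isolate), and then apply Theorem~2.1. The only cosmetic difference is that the paper keeps $a,d$ fixed and defines $b'=\sum_{i=1}^n(-1)^i\binom{n}{i}(bd)^{i-1}b$, $c'=\sum_{i=1}^n(-1)^i\binom{n}{i}c(ac)^{i-1}$, whereas you keep $b,c$ fixed and modify $a,d$; the verifications are identical either way, and your handling of the converse via ordinary Jacobson's lemma plus symmetry also matches the paper.
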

\begin{proof}  $\Longrightarrow$ Let $\alpha=(1-ac)^n$. Then $$\begin{array}{lll}
\alpha&=&\sum\limits_{i=0}^{n}(-1)^{i}\left(
\begin{array}{c}
n\\
i
\end{array}
\right)(ac)^{i}\\
&=&1-a\sum\limits_{i=1}^{n}(-1)^i\left(
\begin{array}{c}
n\\
i
\end{array}
\right)c(ac)^{i-1}\\
&=&1-ac',
\end{array}$$ where $c'=\sum\limits_{i=1}^{n}(-1)^i\left(
\begin{array}{c}
n\\
i
\end{array}
\right)c(ac)^{i-1}$. Let $\beta=(1-ba)^n$. Then
$$\begin{array}{lll}
\beta&=&\sum\limits_{i=0}^{n}(-1)^{i}\left(
\begin{array}{c}
n\\
i
\end{array}
\right)(bd)^{i}\\
&=&1-\big(\sum\limits_{i=1}^{n}(-1)^i\left(
\begin{array}{c}
n\\
i
\end{array}
\right)(bd)^{i-1}b\big)d\\
&=&1-b'd,
\end{array}$$ where $b'=\sum\limits_{i=1}^{n}(-1)^i\left(
\begin{array}{c}
n\\
i
\end{array}
\right)(bd)^{i-1}b.$
We directly compute that
$$\begin{array}{ll}
&(ac')^2\\
=&\big[a\sum\limits_{i=1}^{n}(-1)^i\left(
\begin{array}{c}
n\\
i
\end{array}
\right)c(ac)^{i-1}\big]^2=\big[\sum\limits_{i=1}^{n}(-1)^i\left(
\begin{array}{c}
n\\
i
\end{array}
\right)(ac)^{i}\big]^2\\
=&\big[\sum\limits_{i=1}^{n}(-1)^i\left(
\begin{array}{c}
n\\
i
\end{array}
\right)(ac)^{i}\big]ac\big[\sum\limits_{i=1}^{n}(-1)^i\left(
\begin{array}{c}
n\\
i
\end{array}
\right)(ac)^{i-1}\big];\\
&(db')(ac')\\
=&\big[\sum\limits_{i=1}^{n}(-1)^i\left(
\begin{array}{c}
n\\
i
\end{array}
\right)d(bd)^{i-1}b\big]\big[\sum\limits_{i=1}^{n}(-1)^i\left(
\begin{array}{c}
n\\
i
\end{array}
\right)(ac)^{i}\big]\\
=&\big[\sum\limits_{i=1}^{n}(-1)^i\left(
\begin{array}{c}
n\\
i
\end{array}
\right)(db)^{i}\big]ac\big[\sum\limits_{i=1}^{n}(-1)^i\left(
\begin{array}{c}
n\\
i
\end{array}
\right)(ac)^{i-1}\big].
\end{array}$$ Since $(ac)^i(ac)=(db)^i(ac)$ for any $i\in {\Bbb N}$, we have $(ac')^2=(db')(ac').$ Moreover, we check that
$$\begin{array}{ll}
&(db')^2\\
=&\big[d\sum\limits_{i=1}^{n}(-1)^i\left(
\begin{array}{c}
n\\
i
\end{array}
\right)(bd)^{i-1}b\big]^2=\big[\sum\limits_{i=1}^{n}(-1)^i\left(
\begin{array}{c}
n\\
i
\end{array}
\right)(db)^{i}\big]^2\\
=&\big[\sum\limits_{i=1}^{n}(-1)^i\left(
\begin{array}{c}
n\\
i
\end{array}
\right)(db)^{i}\big]db\big[\sum\limits_{i=1}^{n}(-1)^i\left(
\begin{array}{c}
n\\
i
\end{array}
\right)(db)^{i-1}\big];\\
&(ac')(db')\\
=&\big[a\sum\limits_{i=1}^{n}(-1)^i\left(
\begin{array}{c}
n\\
i
\end{array}
\right)c(ac)^{i-1}\big]\big[d\sum\limits_{i=1}^{n}(-1)^i\left(
\begin{array}{c}
n\\
i
\end{array}
\right)b(db)^{i-1}\big]\\
=&\big[\sum\limits_{i=1}^{n}(-1)^i\left(
\begin{array}{c}
n\\
i
\end{array}
\right)(ac)^{i}\big]db\big[\sum\limits_{i=1}^{n}(-1)^i\left(
\begin{array}{c}
n\\
i
\end{array}
\right)(db)^{i-1}\big].
\end{array}$$ Since $(ac)^i(db)=(db)^i(db)$ for any $i\in {\Bbb N}$, we have $(db')^2=(ac')(db')$. Furthermore, we verify that $$\begin{array}{c}
b'(ac')a=\big[\sum\limits_{i=1}^{n}(-1)^i\left(
\begin{array}{c}
n\\
i
\end{array}
\right)(bd)^{i-1}b\big]\big[a\sum\limits_{i=1}^{n}(-1)^i\left(
\begin{array}{c}
n\\
i
\end{array}
\right)c(ac)^{i-1}a\big]\\
=\big[\sum\limits_{i=1}^{n}(-1)^i\left(
\begin{array}{c}
n\\
i
\end{array}
\right)(bd)^{i-1}\big]\big[\sum\limits_{i=1}^{n}(-1)^i\left(
\begin{array}{c}
n\\
i
\end{array}
\right)b(ac)^{i}a\big];\\
b'(db')a=\big[\sum\limits_{i=1}^{n}(-1)^i\left(
\begin{array}{c}
n\\
i
\end{array}
\right)(bd)^{i-1}b\big]\big[d\sum\limits_{i=1}^{n}(-1)^i\left(
\begin{array}{c}
n\\
i
\end{array}
\right)(bd)^{i-1}\big]a\\
=\big[\sum\limits_{i=1}^{n}(-1)^i\left(
\begin{array}{c}
n\\
i
\end{array}
\right)(bd)^{i-1}\big]\big[\sum\limits_{i=1}^{n}(-1)^i\left(
\begin{array}{c}
n\\
i
\end{array}
\right)b(db)^{i}a\big].
 \end{array}$$ Since $b(ac)a=b(db)a$ we see that $b(ac)^2a=b(dbac)a=bdb(ac)a=bdb(db)a=b(dbdb)a=b(db)^2a.$ By induction, we have
$b(ac)^{i}a=b(db)^{i}a$ for any $n\in {\Bbb N}$. Therefore $b'(ac')a=b'(db')a.$ Also we have
$$\begin{array}{c}
c'(ac')d=\big[\sum\limits_{i=1}^{n}(-1)^ic(ac)^{i-1}\big]\big[a\sum\limits_{i=1}^{n}(-1)^ic(ac)^{i-1}d\big]\\
=\big[\sum\limits_{i=1}^{n}(-1)^i(ca)^{i-1}\big]\big[\sum\limits_{i=1}^{n}(-1)^ic(ac)^{i}d\big];\\
c'(db')d=\big[\sum\limits_{i=1}^{n}(-1)^ic(ac)^{i-1}\big]\big[d\sum\limits_{i=1}^{n}(-1)^i(bd)^{i-1}b\big]d\\
=\big[\sum\limits_{i=1}^{n}(-1)^i(ca)^{i-1}\big]\big[\sum\limits_{i=1}^{n}(-1)^ic(db)^{i}d\big].
\end{array}$$ Since $c(ac)d=c(db)d$, by induction, we get $c(ac)^id=c(db)^id$ for any $n\in {\Bbb N}$.
This implies that $c'(ac')d=c'(db')d$. In light of Theorem 2.1, $1-db'\in R^d$ if and only if $1-ac'\in R^d$, as desired.

$\Longleftarrow$ Since $(1-ac)^n\in R^d$, applying the preceding discussion, $(1-ca)^n\in R^d$ and then
$(1-db)^n\in R^d$. Therefore $(1-bd)^n\in R^d$, the result follows.\end{proof}

\begin{cor} Let $R$ be a ring, let $n\in {\Bbb N}$, and let $a,b,c\in R$ satisfying $$\begin{array}{c}
(aba)b=(aca)b, b(aba)=b(aca),\\
(aba)c=(aca)c, c(aba)=c(aca).
\end{array}$$ Then $(1-ba)^n\in R^{d}$ if and only if
$(1-ac)^n\in R^d$.\end{cor}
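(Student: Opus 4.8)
The plan is to reduce Corollary~2.6 directly to Theorem~2.4 by making the substitution $d=a$, exactly as Corollary~2.3 is obtained from Theorem~2.1. First I would set $d=a$, so that the roles become: the pair $(ac,db)$ in Theorem~2.4 becomes $(ac,ab)$, and $bd=ba$. Then $1-bd=1-ba$ and $1-ac=1-ac$, so the statement $(1-ba)^n\in R^d \iff (1-ac)^n\in R^d$ is precisely the conclusion of Theorem~2.4 with this choice of $d$, provided I can verify that the four hypotheses of Theorem~2.4 hold.

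The key step is therefore the verification of the four identities
$$(ac)^2=(ab)(ac),\quad (ab)^2=(ac)(ab),\quad b(ac)a=b(ab)a,\quad c(ac)a=c(ab)a,$$
from the four hypotheses of Corollary~2.6. This is a routine computation identical in spirit to the proof of Corollary~2.3: from $(aca)c=(aba)c$ one gets $(ac)^2=(aca)c=(aba)c=(ab)(ac)$; from $(aba)b=(aca)b$ one gets $(ab)^2=(aba)b=(aca)b=(ac)(ab)$; from $b(aba)=b(aca)$ one gets $b(ab)a=b(aba)=b(aca)=b(ac)a$; and from $c(aba)=c(aca)$ one gets $c(ab)a=c(aba)=c(aca)=c(ac)a$. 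So all four hypotheses of Theorem~2.4 are met with $d$ replaced by $a$.

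With the hypotheses in hand, Theorem~2.4 immediately yields $(1-ba)^n=(1-bd)^n\in R^d$ if and only if $(1-ac)^n\in R^d$, which is the assertion of the corollary. I do not expect any genuine obstacle here: the only thing to be careful about is that the specialization $d=a$ is legitimate (it is, since Theorem~2.4 holds for \emph{all} $a,b,c,d$ satisfying its hypotheses, and nothing forces $d\neq a$), and that the four displayed identities of Corollary~2.6 are exactly what is needed after the substitution — which the computation above confirms. Thus the proof is a two-line invocation of Theorem~2.4 preceded by the short verification of its hypotheses, mirroring precisely how Corollary~2.3 follows from Theorem~2.1 and how Corollary~2.5 specializes Theorem~2.1.
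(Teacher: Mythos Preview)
Your proposal is correct and is exactly the paper's approach: the paper's proof reads ``This is obvious by Theorem 2.4,'' and your substitution $d=a$ together with the verification of the four hypotheses is precisely what makes that obvious. The computation you give mirrors word-for-word the derivation in the proof of Corollary~2.3, as you anticipated.
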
\begin{proof} This is obvious by Theorem 2.4.\end{proof}

\section{Drazin inverse}

As it is known, $a\in R^D$ if and only if there exists $x\in R$ such that $x=xax, x\in comm^2(a), a-a^2x\in R^{nil}$, and so $a^D=a^d$. For the generalized Jacobson's Lemma for Drazin inverse, we have

\begin{thm} Let $R$ be a ring, and let $a,b,c,d\in R$ satisfying $$\begin{array}{c}
(ac)^2=(db)(ac), (db)^2=(ac)(db);\\
b(ac)a=b(db)a, c(ac)d=c(db)d.
\end{array}$$ Then $1-bd\in R^D$ if and only if $1-ac\in R^D$. In this case,
$$\begin{array}{lll}
(1-ac)^{D}&=&\big[1-d(1-bd)^{\pi}(1-\alpha (1+bd))^{-1}bac\big](1+ac)\\
&+&d(1-bd)^Dbac,\\
i(1-bd)&\leq &i(1-ac)+1.
\end{array}$$
\end{thm}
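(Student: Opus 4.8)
The plan is to reduce the Drazin-inverse statement to the already-proved g-Drazin case (Theorem~2.1) by supplying the one extra ingredient that distinguishes $R^D$ from $R^d$: a nilpotency (rather than mere quasi-nilpotency) of the ``remainder'' term, together with an explicit bound on its index. First I would recall that $a\in R^D$ precisely when $a\in R^d$ and $a-a^2a^d\in R^{nil}$, and that in this situation $a^D=a^d$. So the equivalence $1-bd\in R^D\iff 1-ac\in R^D$ will follow once we know (i)~the g-Drazin equivalence, which is Theorem~2.1, and (ii)~that $\beta-\beta^2\beta^d$ is nilpotent whenever $\alpha-\alpha^2\alpha^d$ is, and conversely. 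The formula for $(1-ac)^D$ is then immediate: it is just the formula for $\beta^d$ from Theorem~2.1 with $\alpha^d=(1-bd)^d$ replaced by $(1-bd)^D$ and $\alpha^\pi=(1-bd)^\pi$, since $R^D$-inverse and $R^d$-inverse coincide.

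Next I would make the quasi-nilpotent element from Step~2 of the proof of Theorem~2.1 explicit. There we computed
$$\beta-\beta\beta^d\beta=d\alpha\,bacd\,p\,(1-p\alpha(1+bd))^{-2}bac,$$
where $p=\alpha^\pi$. Write $q=d\,bacd\,p\,(1-p\alpha(1+bd))^{-2}bac$, so that $\beta-\beta^2\beta^d=\alpha q$ after using that $bacd$ commutes with $\alpha$ and hence with $p$ and $(1-p\alpha(1+bd))^{-1}$ (this commutation was established in Step~1). The key point is that $q$ absorbs a factor $\beta^\pi=p$, and one can push further: using the identities $p=(bd)^{2}p(1-p\alpha(1+bd))^{-1}$ and the relation $(ac)^{k}d=d(bd)^{k}$ type moves from the hypotheses, the term $q$ can be rewritten so that $\alpha q = \alpha^{\pi}\cdot(\text{stuff})$ and, more importantly, $(\alpha q)^{k}$ carries a factor $\alpha^{k}\alpha^{\pi}$ for every $k$. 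Since $\alpha=1-bd\in R^D$ means $\alpha\alpha^{\pi}=\alpha-\alpha^{2}\alpha^{d}$ is nilpotent of index $i(\alpha)=i(1-bd)$, we get $(\alpha\alpha^{\pi})^{i(1-bd)}=0$, hence $(\beta-\beta^2\beta^d)^{i(1-bd)+1}=0$ after accounting for the one extra $\beta$-factor (equivalently the one extra $\alpha$ that does not sit under the $\alpha^\pi$). This yields $\beta=1-ac\in R^D$ together with $i(1-ac)\le i(1-bd)+1$; by symmetry (via Jacobson's lemma $1-ac\in R^D\iff 1-ca\in R^D$, as in the converse direction of Theorem~2.1) one gets $i(1-bd)\le i(1-ac)+1$, which is the displayed index bound.

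Concretely the steps are: (1)~invoke Theorem~2.1 to get $1-bd\in R^d\iff 1-ac\in R^d$ and the formula $\beta^d=y$; (2)~assume $1-bd\in R^D$, so $\alpha\alpha^\pi$ is nilpotent of index $k:=i(1-bd)$; (3)~starting from the Step~2 identity $\beta-\beta^2\beta^d=\alpha\,bacd\,p\,(1-p\alpha(1+bd))^{-2}bac$, use the commutation of $bacd$ with $\alpha,p,(1-p\alpha(1+bd))^{-1}$ and the hypothesis-driven identities (as in the ``$p=(bd)^{2}p(\cdots)^{-1}=(bd)^4 p(\cdots)^{-2}$'' computation in Step~2) to show $(\beta-\beta^2\beta^d)^{m}$ is a left multiple of $\alpha^{m-1}\alpha^{\pi}$ (the single lost power of $\alpha$ coming from the leading $\alpha$ in the expression not being under $\alpha^\pi$); (4)~conclude $(\beta-\beta^2\beta^d)^{k+1}=0$, hence $1-ac\in R^D$ with $i(1-ac)\le k+1$ and $(1-ac)^D=(1-ac)^d=y$, which is the stated formula; (5)~run the converse through $1-ca$ to obtain $i(1-bd)\le i(1-ac)+1$.

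The main obstacle will be step~(3): making precise the claim that each power of the remainder $\beta-\beta^2\beta^d$ picks up an extra factor of $\alpha$ sitting to the left of $\alpha^\pi$, i.e.\ controlling exactly how the powers of $bd$ and $ac$ interlace under the hypotheses $b(ac)^i a=b(db)^i a$, $c(ac)^i d=c(db)^i d$ and $(ac)^i=(db)^i$ on the relevant two-sided ideal, so that the bookkeeping gives index $\le k+1$ rather than something larger. This is essentially a careful induction on $m$ using the same algebraic moves already used in Steps~1 and~2 of Theorem~2.1's proof, so no new idea is needed, only disciplined computation; everything else is a direct appeal to Theorem~2.1 and to the characterization $R^D = \{a\in R^d : a-a^2a^d\in R^{nil}\}$.
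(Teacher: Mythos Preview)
Your plan is correct and matches the paper's proof exactly: invoke Theorem~2.1 for the g-Drazin formula, then upgrade quasi-nilpotency of $\beta-\beta^2\beta^d=d\alpha\,bacd\,p(1-p\alpha(1+bd))^{-2}bac$ to nilpotency using that $\alpha\alpha^\pi$ is nilpotent of index $i(1-bd)$, and obtain the reverse index bound from the converse direction through $1-ca$.

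Two remarks. First, your factorization ``$\beta-\beta^2\beta^d=\alpha q$'' is not literally valid since $\alpha$ does not commute with the leading $d$; what actually works (and what you implicitly use when you speak of ``each power picking up a factor'') is the cyclic shift
\[
(\beta-\beta^2\beta^d)^{m+1}=d\alpha\,bacd\,p(\cdots)^{-2}\big[bacd\,\alpha\,bacd\,p(\cdots)^{-2}\big]^{m}bac,
\]
so nilpotency of the bracketed element suffices (the paper cites \cite[Lemma~3.1]{Mi} for this reduction). Second, you overestimate the difficulty of step~(3): no induction and no further hypothesis-driven identities are needed beyond the single commutation $bacd\in comm(\alpha)$ already established in Step~1 of Theorem~2.1. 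That alone forces $bacd$, $\alpha p$, and $(1-p\alpha(1+bd))^{-2}$ to pairwise commute, so for $n=i(1-bd)$ the bracketed element to the $n$th power carries the factor $(\alpha p)^n=(\alpha-\alpha^2\alpha^D)^n=0$; hence $(\beta-\beta^2\beta^d)^{n+1}=0$ in one line.
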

\begin{proof} Set $\alpha=1-bd$ and $\beta=1-ac$. Let $p=\alpha^{\pi},x=\alpha^D$. In view of Theorem 2.1, $\beta\in R^d$ and $$\beta^d=\big[1-dp(1-p\alpha (1+bd))^{-1}bac\big](1+ac)+dxbac.$$ We shall prove that $\beta^D=\beta^d$.

We will suffice to check $\beta-\beta \beta^d\beta\in R^{nil}$.
As in the proof of Theorem 2.1, we have
$$\begin{array}{lll}
\beta-\beta \beta^d\beta&=&\beta (1-\beta^d\beta )\\
&=&d\alpha bacdp(1-p\alpha(1+bd))^{-2}bac.
\end{array}$$ In light of~\cite[Lemma 3.1]{Mi}, we will suffice to prove $$bacd\alpha bacdp(1-p\alpha(1+bd))^{-2}\in R^{nil}.$$
Similarly to the discussion in Theorem 2.1, we see that $bacd\in comm(\alpha)$, and so $bacd, \alpha p$ and $(1-p\alpha(1+bd))^{-2}$ commute one another.
Set $n=i(\alpha)$. Then $$\begin{array}{ll}
&\big[bacd\alpha bacdp(1-p\alpha(1+bd))^{-2}\big]^n\\
=&(bacd)^2(1-p\alpha(1+bd))^{-2n}(\alpha-\alpha^2\alpha^d)^n\\
=&0;
\end{array}$$ hence,
$$\begin{array}{l}
(\beta-\beta \beta^d\beta)^{n+1}\\
=d\alpha bacdp(1-p\alpha(1+bd))^{-2}\big[bacd\alpha bacdp(1-p\alpha(1+bd))^{-2}\big]^nbac\\
=0.
\end{array}$$ Thus we have
$\beta-\beta \beta^d\beta\in R^{nil}$, and so $\beta^D=\beta^d$. Moreover, we have $i(\beta)\leq i(\alpha)+1$, as desired.\end{proof}

As an immediate consequence of Theorem 3.1, we now derive

\begin{cor} Let $R$ be a ring, and let $a,b,c,d\in R$ satisfying $$acd=dbd, dba=aca.$$ Then $1-bd\in R^{D}$ if and only if
$1-ac\in R^D$. In this case, $$\begin{array}{lll}
(1-ac)^{D}&=&\big[1-d\alpha^{\pi}(1-\alpha (1+bd))^{-1}bac\big](1+ac)\\
&+&d(1-bd)^Dbac,\\
i(1-bd)&\leq&i(1-ac)+1.
\end{array}$$\end{cor}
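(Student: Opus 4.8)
The plan is to deduce Corollary 3.2 directly from Theorem 3.1, exactly as Corollary 2.2 was deduced from Theorem 2.1: the only thing that needs doing is to check that the two relations $acd=dbd$ and $dba=aca$ entail the four hypotheses of Theorem 3.1, namely $(ac)^2=(db)(ac)$, $(db)^2=(ac)(db)$, $b(ac)a=b(db)a$ and $c(ac)d=c(db)d$.

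First I would use $dba=aca$ to write $(ac)^2=(aca)c=(dba)c=(db)(ac)$ and $b(ac)a=b(aca)=b(dba)=b(db)a$. Then I would use $acd=dbd$ to write $(db)^2=(dbd)b=(acd)b=(ac)(db)$ and $c(ac)d=c(acd)=c(dbd)=c(db)d$. Each of these is a single substitution, so the verification is purely mechanical and occupies only a few lines, just as in the proof of Corollary 2.2.

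With these four identities in hand, Theorem 3.1 applies verbatim to the quadruple $a,b,c,d$ and yields at once all the assertions of the corollary: $1-bd\in R^{D}$ if and only if $1-ac\in R^{D}$; the explicit expression $(1-ac)^{D}=\big[1-d\alpha^{\pi}(1-\alpha(1+bd))^{-1}bac\big](1+ac)+d(1-bd)^{D}bac$ with $\alpha=1-bd$; and the index estimate $i(1-bd)\le i(1-ac)+1$. There is essentially no obstacle here, since the entire substantive content is carried by Theorem 3.1 and the reduction step is the short chain of rewrites above; if anything, the only point requiring the slightest care is to invoke the relations in the correct order so that each rewrite is legitimate.
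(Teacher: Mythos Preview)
Your proposal is correct and matches the paper's approach: the paper presents Corollary 3.2 as an immediate consequence of Theorem 3.1 without further proof, and the required verification of the four hypotheses is precisely the one-line-each computation you give (identical to the proof of Corollary 2.2).
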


\begin{cor} Let $R$ be a ring, and let $a,b,c\in R$ satisfying $$\begin{array}{c}
(aba)b=(aca)b, b(aba)=b(aca),\\
(aba)c=(aca)c, c(aba)=c(aca).
\end{array}$$ Then $1-ba\in R^D$ if and only if $1-ac\in R^D$. In this case,
$$\begin{array}{lll}
(1-ac)^{D}&=&\big[1-a(1-ba)^{\pi}(1-(1-ba)(1+ba))^{-1}bac\big](1+ac)\\
&+&d(1-ba)^Dbac,\\
i(1-ba)&\leq &i(1-ac)+1.
\end{array}$$\end{cor}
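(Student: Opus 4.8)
The plan is to obtain this corollary from Theorem 3.1 by specializing $d=a$, in exact parallel with the way Corollary 2.3 was derived from Theorem 2.1 and with how Corollary 3.2 follows from Theorem 3.1. With $d=a$ one has $bd=ba$, so the equivalence $1-ba\in R^D\Longleftrightarrow 1-ac\in R^D$ together with the index estimate $i(1-ba)\le i(1-ac)+1$ are precisely the assertions of Theorem 3.1 applied to the quadruple $(a,b,c,a)$, once its four hypotheses are checked for this quadruple.

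So the first step is to verify that the four identities assumed for $a,b,c$ force the four hypotheses of Theorem 3.1 with $d=a$. This is the routine computation
\begin{align*}
(ac)^2 &= (aca)c = (aba)c = (ab)(ac) = (db)(ac),\\
(db)^2 &= (ab)^2 = (aba)b = (aca)b = (ac)(ab) = (ac)(db),\\
b(ac)a &= b(aca) = b(aba) = b(ab)a = b(db)a,\\
c(ac)d &= c(ac)a = c(aca) = c(aba) = c(ab)a = c(db)d,
\end{align*}
in which the central equality of each line invokes, respectively, $(aba)c=(aca)c$, $(aba)b=(aca)b$, $b(aba)=b(aca)$, and $c(aba)=c(aca)$.

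Having established the hypotheses of Theorem 3.1 for $(a,b,c,a)$, I would simply quote its conclusion: $1-ac\in R^D$ if and only if $1-ba\in R^D$, with $i(1-ba)\le i(1-ac)+1$ and $(1-ac)^D$ given by the closed form of Theorem 3.1. Substituting $d=a$ and $\alpha=1-bd=1-ba$ into that formula turns $1-\alpha(1+bd)$ into $1-(1-ba)(1+ba)$ and turns each $d$ into $a$, which is the displayed expression for $(1-ac)^D$.

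I do not anticipate any genuine difficulty: the entire argument is the observation that putting $d=a$ converts the hypotheses, the conclusion, and the formula of Theorem 3.1 into those of the present corollary. The only point needing a moment's attention is matching each of the four assumed identities to the single hypothesis of Theorem 3.1 that it is used to verify, exactly as indicated in the chains of equalities above.
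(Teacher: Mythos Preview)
Your proposal is correct and matches the paper's approach: the corollary is obtained from Theorem~3.1 by the substitution $d=a$, with the four hypotheses verified exactly as in the proof of Corollary~2.3. (The residual ``$d$'' in the displayed formula for $(1-ac)^D$ is a typo in the statement and should be $a$, as your substitution indicates.)
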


The group of $a\in R$ is the unique element $a^{\#}\in R$ which satisfies $a=aa^{\#}a, a^{\#}=a^{\#}aa^{\#}.$ We denote the set of all group invertible elements of $R$ by $R^{\#}$. As it is well known, $a\in R^{\#}$ if and only if $a\in R^D$ and $i(a)=1$. We are now ready to prove:

\begin{thm} Let $R$ be a ring, and let $a,b,c,d\in R$ satisfying $$\begin{array}{c}
(ac)^2=(db)(ac), (db)^2=(ac)(db);\\
b(ac)a=b(db)a, c(ac)d=c(db)d.
\end{array}$$ Then $1-bd$ has group inverse if and only if $1-ac$ has group inverse. In this case, $$\begin{array}{ll}
&(1-ac)^{\#}\\
=&\big[1-d\alpha^{\pi}(1-\alpha (1+bd))^{-1}bac\big](1+ac)+d(1-bd)^{\#}bac.
\end{array}$$\end{thm}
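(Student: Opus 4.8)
The plan is to run everything through the g-Drazin machinery of Theorem~2.1 and then exploit the fact that the group-inverse hypothesis collapses the relevant ``error term'' to zero. I would start from the characterization that an element $z\in R$ has a group inverse exactly when $z\in R^{d}$ and $z=z\,z^{d}\,z$ (for then $z^{d}=z^{d}zz^{d}$, $z=zz^{d}z$ and $z^{d}\in comm(z)$). So, with $\alpha=1-bd$ and $\beta=1-ac$, it suffices to assume $\alpha\in R^{\#}$ and prove $\beta=\beta\beta^{d}\beta$, where $\beta^{d}$ is the g-Drazin inverse that Theorem~2.1 produces from $\alpha^{d}$; the displayed formula for $\beta^{d}$ there, with $\alpha^{d}=(1-bd)^{\#}$, is then exactly the claimed expression for $(1-ac)^{\#}$ (and, since $\alpha^{\pi}\alpha=0$ as noted below, the inner factor $(1-\alpha^{\pi}\alpha(1+bd))^{-1}$ is just $1$).

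Next I would record that $\alpha\in R^{\#}$ gives $\alpha^{d}=\alpha^{\#}$, $\alpha\alpha^{d}\alpha=\alpha$, and hence, with $p:=\alpha^{\pi}=1-\alpha\alpha^{d}$,
$$\alpha p=\alpha-\alpha\alpha^{d}\alpha=0 .$$
From Step~2 of the proof of Theorem~2.1 I would take the identity
$$\beta-\beta\beta^{d}\beta=d\,\alpha\,(bacd)\,p\,(1-p\alpha(1+bd))^{-2}bac$$
together with the commutation relation $(bacd)\alpha=\alpha(bacd)$ established there. Sliding $bacd$ to the left past the single factor $\alpha$ and invoking $\alpha p=0$ yields
$$\beta-\beta\beta^{d}\beta=d\,(bacd)\,(\alpha p)\,(1-p\alpha(1+bd))^{-2}bac=0 .$$
Thus $\beta=\beta\beta^{d}\beta$; together with $\beta^{d}=\beta^{d}\beta\beta^{d}$ and $\beta^{d}\in comm^{2}(\beta)$ (both from Theorem~2.1) this says $\beta^{d}$ is the group inverse of $1-ac$. (As a by-product $\beta-\beta^{2}\beta^{d}=0$, so $i(1-ac)\le1$, sharpening the estimate $i(1-ac)\le i(1-bd)+1$ of Theorem~3.1.)

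For the converse I would first derive Jacobson's lemma for the group inverse from what has just been proved: the forward implication applied to the (trivially admissible) tuple $(a,x,x,a)$, for which $bd=xa$ and $ac=ax$, gives $1-xa\in R^{\#}\Rightarrow1-ax\in R^{\#}$, and applied to $(x,a,a,x)$ gives the reverse, so $1-ax\in R^{\#}\iff1-xa\in R^{\#}$ for all $a,x\in R$. Now assume $1-ac\in R^{\#}$; then $1-ca\in R^{\#}$. The substitution $(a,b,c,d)\mapsto(d,c,b,a)$ leaves the four hypothesis equations invariant (it swaps $ac\leftrightarrow db$ and $bd\leftrightarrow ca$) and sends ``$1-bd$'' to $1-ca$ and ``$1-ac$'' to $1-db$, so the forward implication gives $1-db\in R^{\#}$, and one more use of Jacobson's lemma gives $1-bd\in R^{\#}$.

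The one genuinely delicate point is the vanishing $\beta-\beta\beta^{d}\beta=0$, and it rests entirely on the commutation $bacd\in comm(\alpha)$ extracted in the proof of Theorem~2.1: this is what allows the isolated copy of $\alpha$ at the far left of the error term to be carried over to $p=\alpha^{\pi}$ and killed. Without it one recovers only the (quasi)nilpotent estimates of Theorems~2.1 and~3.1, which cannot by themselves force the Drazin index to be $1$; with it, the index-one property of $1-bd$ passes verbatim to $1-ac$, and the symmetry/Jacobson argument returns the converse.
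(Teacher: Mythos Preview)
Your argument is correct and follows the paper's own proof essentially verbatim: you use $\alpha p=0$ (from $\alpha\in R^{\#}$) together with the commutation $(bacd)\alpha=\alpha(bacd)$ from Theorem~2.1 to kill the error term $\beta-\beta\beta^{d}\beta$, exactly as the paper does (it routes through Theorem~3.1 and $\beta^{D}$ rather than directly through $\beta^{d}$, but this is cosmetic). Your converse via the substitution $(a,b,c,d)\mapsto(d,c,b,a)$ and Jacobson's lemma is also the paper's standard device (used in Theorem~2.1), only spelled out more explicitly here than the paper bothers to do for Theorem~3.4.
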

\begin{proof} Since $1-bd\in R^{\#}$, we have $1-bd\in R^D$. In light of Theorem 3.1, $1-ac\in R^D$.
Let $\alpha=1-bd$ and $\beta=1-ac$. Let $p=1-\alpha\alpha^D$. Since $\alpha\in R^{\#}$, we have $\alpha p=\alpha-\alpha^2\alpha^D=0$. As in the proof of Theorem 3.1,
we have $$\begin{array}{ll}
&\beta-\beta \beta^D\beta\\
=&d\alpha (bacd)p(1-p\alpha(1+bd))^{-2}bac\\
=&d(bacd)\alpha p(1-p\alpha(1+bd))^{-2}bac\\
=&0.
\end{array}$$ Obviously, $\beta^D\in comm(\beta)$ and $\beta^D=\beta^D\beta\beta^D$.
Therefore $$\begin{array}{l}
\beta^{\#}=\beta^D\\
=\big[1-d\alpha^{\pi}(1-\alpha (1+bd))^{-1}bac\big](1+ac)+d(1-bd)^{\#}bac.
\end{array}$$ This completes the proof.\end{proof}

\begin{cor} Let $R$ be a ring, and let $a,b,c\in R$ satisfying $$\begin{array}{c}
(aba)b=(aca)b, b(aba)=b(aca),\\
(aba)c=(aca)c, c(aba)=c(aca).
\end{array}$$  Then $1-ba$ has group inverse if and only if $1-ac$ has group inverse. In this case, $$\begin{array}{ll}
&(1-ac)^{\#}\\
=&\big[1-a(1-ba)^{\pi}(1-\alpha (1+ba))^{-1}bac\big](1+ac)+a(1-ba)^{\#}bac,
\end{array}$$
\end{cor}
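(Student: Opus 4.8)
The plan is to reduce Corollary 3.5 directly to Theorem 3.4, exactly mirroring how Corollary 2.3 was deduced from Theorem 2.1 and how Corollary 3.3 was deduced from Theorem 3.1. First I would set $d = a$ in Theorem 3.4, so that the role of ``$bd$'' becomes ``$ba$'' and the role of ``$db$'' becomes ``$ab$''. Then I must verify that the four hypotheses of Theorem 3.4, namely $(ac)^2 = (ab)(ac)$, $(ab)^2 = (ac)(ab)$, $b(ac)a = b(ab)a$, and $c(ac)a = c(ab)a$, follow from the assumptions $(aba)b = (aca)b$, $b(aba) = b(aca)$, $(aba)c = (aca)c$, $c(aba) = c(aca)$. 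This is the same bookkeeping already carried out in the proof of Corollary 2.3: $(ac)^2 = (aca)c = (aba)c = (ab)(ac)$ using $(aba)c = (aca)c$; $(ab)^2 = (aba)b = (aca)b = (ac)(ab)$ using $(aba)b = (aca)b$; $b(ac)a = b(aca) = b(aba) = b(ab)a$ using $b(aba) = b(aca)$; and $c(ac)a = c(aca) = c(aba) = c(ab)a$ using $c(aba) = c(aca)$.

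Once these identities are in place, Theorem 3.4 applies verbatim with $d$ replaced by $a$: it gives that $1 - ba$ has a group inverse if and only if $1 - ac$ has a group inverse, and in that case
$$(1-ac)^{\#} = \big[1 - a(1-ba)^{\pi}(1-\alpha(1+ba))^{-1}bac\big](1+ac) + a(1-ba)^{\#}bac,$$
where $\alpha = 1 - ba$, which is precisely the stated formula. So the corollary is immediate.

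I do not expect any genuine obstacle here; the only thing to be careful about is the substitution $d \mapsto a$ throughout the conclusion of Theorem 3.4 — in particular the ``$\alpha$'' appearing in the displayed formula now denotes $1 - ba$ rather than $1 - bd$, and the leading coefficient $d$ in the term $d(1-bd)^{\#}bac$ becomes $a$, while in the last summand of the bracketed expression $d\alpha^{\pi}(\cdots)bac$ the outer $d$ likewise becomes $a$. Since all four hypotheses of Theorem 3.4 have been checked, the equivalence and the formula follow at once, so the proof consists of nothing more than displaying these four verifications and invoking Theorem 3.4.

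\begin{proof} By hypothesis, we verify that
$$\begin{array}{c}
(ac)^2=(aca)c=(aba)c=(ab)(ac),\\
(ab)^2=(aba)b=(aca)b=(ac)(ab);\\
b(ac)a=b(aca)=b(aba)=b(ab)a,\\
c(ac)a=c(aca)=c(aba)=c(ab)a.
\end{array}$$ Hence the conditions of Theorem 2.7 hold with $d$ replaced by $a$, and the result follows.\end{proof}
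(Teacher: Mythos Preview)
Your proposal is correct and takes exactly the same approach as the paper, which simply states ``This is obvious by Theorem 3.4''; your version just spells out the four verifications (identical to those in Corollary 2.3) before invoking the theorem. The only slip is the reference ``Theorem 2.7'' in your final displayed proof, which does not exist in the paper's numbering and should read ``Theorem 3.4''.
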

\begin{proof} This is obvious by Theorem 3.4.\end{proof}

Corollary 3.5 is a nontrivial generalization of ~\cite[Corollary 2.4]{M} as the following example follows.

\begin{exam}\end{exam} Let $R=M_2({\Bbb C})$. Choose $$a=
\left(
\begin{array}{cc}
1&1\\
1&0
\end{array}
\right), b=\left(
\begin{array}{cc}
1&-1\\
0&0
\end{array}
\right), c=\left(
\begin{array}{cc}
0&0\\
0&0
\end{array}
\right)
\in R$$ Then we see that
$$\begin{array}{c}
(aba)b=(aca)b, b(aba)=b(aca),\\
(aba)c=(aca)c, c(aba)=c(aca).
\end{array}$$ But $aba=\left(
\begin{array}{cc}
0&1\\
0&1
\end{array}
\right)\neq 0=aca.$ In this case, $$(1-ac)^{\#}=\left(
\begin{array}{cc}
1&0\\
0&1
\end{array}
\right), (1-ba)^{\#}=\left(
\begin{array}{cc}
1&-1\\
0&1
\end{array}
\right).$$

\end{document}